\newtheorem{lem}{Lemma}[section]
\newtheorem{thm}[lem]{Theorem}
\newtheorem{exam}[lem]{Example}
\begin{document}

\title{On the number of proper paths between vertices in edge-colored hypercubes}

\author{ Lina Xue, \quad Weihua Yang\footnote{Corresponding author. E-mail: ywh222@163.com,~yangweihua@tyut.edu.cn},\quad Shurong Zhang\\
\\ \small Department of Mathematics, Taiyuan University of
Technology,\\
\small  Taiyuan Shanxi-030024,
China}
\date{}
\maketitle

{\small{\bf Abstract:} Given an integer  $1\leq j <n$, define the
$(j)$-coloring of a $n$-dimensional hypercube $H_{n}$ to be the
$2$-coloring of the edges of $H_{n}$ in which all edges in dimension
$i$, $1\leq i \leq j$, have color $1$ and all other edges have color
$2$. Cheng et al.
  [Proper distance in edge-colored hypercubes, Applied Mathematics and Computation 313 (2017) 384-391.]
   determined the number of distinct shortest properly
colored paths between a pair of vertices for the $(1)$-colored
hypercubes. It is natural to consider the number for $(j)$-coloring, $j\geq 2$. In this note, we determine the number of different shortest
proper paths in $(j)$-colored hypercubes for arbitrary $j$.

\vskip 0.5cm  Keywords: Hypercube; Number of proper paths; $2$-edge colored hypercubes

\section{Introduction}

In this work, all graph colorings are colorings of the edges. A coloring of a
graph (or subgraph) is called $proper$ if no two adjacent edges in the
graph (respectively subgraph) have the same color. A colored graph
is called $properly$ $connected$ if there is a properly colored path
between every pair of vertices \cite{Eddie Cheng}.

Given a properly connected coloring of a graph, Coll et al. \cite{Coll} initiated
the study of the proper distance between pairs
of vertices in the hypercube. The $proper$ $distance$ between two vertices $u$ and $v$,
denoted by $pd(u,v)$, is the minimum length (number of edges) of a
properly colored path between $u$ and $v$. Although the proper
connection number has been studied and applications to computer
science and networking have been described, precious little
attention has been paid to the proper distance which is, in some
sense, a measure of the efficiency of the properly connected
network \cite{Eddie Cheng}.

Numerous works related to proper connectivity and the proper
connection number have been studied in recent years, see the dynamic
survey \cite{Li} for more details. The study of proper connection in graphs is motivated
in part by applications in networking and network security. In addition to security,
of course it is critical that message transmission in a network be fast \cite{Eddie Cheng}.
So we consider the number of different shortest proper colored paths between
any pair of vertices in $(j)$-colored hypercubes.

An $n$-dimensional hypercube $H_{n}$ (with $n\geq2$) is an undirected
graph $H_{n}=(V,E)$ with $|V|=2^{n}$ and $|E|=n2^{n-1}$. Every vertex
can be represented by an $n$-bit binary string. There is an edge
between two vertices whenever their binary string representation
differs in only one bit position. The $i$-dimensional edges of
hypercubes is the set of edges whose two ends have the different
$i^{th}$ bit position. Given $1\leq j <n$, define the $(j)$-coloring
of $H_{n}$ to be the $2$-coloring of the edges of $H_{n}$ in which
all edges in dimension $i$, where $1\leq i \leq j$, have color $1$ and all
other edges have color $2$. For example, the $(1)$-coloring of
$H_{n}$ has a perfect matching in color $1$ and all other edges in
color $2$.

For any two vertices $u$ and $v$ of $H_n$, let $o(u,v)$ denote the
number of positions $i$ for $1\leq i\leq j$ in which $u$ and $v$
differ. Let $t(u,v)$ denote the number of positions $i$ for $j<
i\leq n$ in which $u$ and $v$ differ. Let $\gamma(u,v)$ be the
indicator variable which takes the value $1$ if $o(u,v)+t(u,v)$ is
odd and 0 if $o(u,v)+t(u,v)$ is even.

Cheng et al. proved the proper distance between any two
vertices of a $(j)$-colored hypercube.
 \begin{thm}[\cite{Eddie Cheng}]
 Let $n\geq2$, $1\leq j < n$, and $H$ be the $(j)$-coloring of $H_{n}$. Then $H$ is properly connected. Furthermore, let $u$ and $v$ be any pair of vertices in $H$. Then
 $$pd(u,v)=2max(o(u,v),t(u,v))-\gamma(u,v)$$
\end{thm}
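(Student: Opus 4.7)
The plan is to prove the two inequalities $pd(u,v)\ge 2\max(o,t)-\gamma$ and $pd(u,v)\le 2\max(o,t)-\gamma$ separately, abbreviating $o=o(u,v)$, $t=t(u,v)$, $\gamma=\gamma(u,v)$. The lower bound is a parity-plus-alternation count; the upper bound is by an explicit interleaving construction, whose simplicity is the one nontrivial point.

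\textbf{Lower bound.} Every edge of $H_n$ flips exactly one coordinate, so any walk from $u$ to $v$ has length of the same parity as the number of differing coordinates, which is $o+t$; in particular any proper path has length $\ell\equiv o+t\pmod 2$. Color-$1$ edges flip coordinates in $[1,j]$ and color-$2$ edges flip coordinates in $[j+1,n]$, so a coordinate-wise parity count gives $c_1\ge o$ and $c_2\ge t$ for the respective edge counts $c_1,c_2$ of the path. Alternation of colors forces $|c_1-c_2|\le 1$, hence $\ell=c_1+c_2\ge 2\max(o,t)-1$. When $\gamma=0$ the parity constraint bumps this up to $\ell\ge 2\max(o,t)$; when $\gamma=1$ the bound $2\max(o,t)-1$ already has the right parity. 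In both cases $\ell\ge 2\max(o,t)-\gamma$.

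\textbf{Upper bound.} Assume, without loss of generality, $t\ge o$, and set $c_2=t$ and $c_1=t-\gamma$. Let $W_2$ be the simple path in $H_{n-j}$ obtained by flipping, one at a time, the $t$ coordinates in $[j+1,n]$ on which $u$ and $v$ disagree. Because $c_1\ge o$ and $c_1\equiv o\pmod 2$, there exists a walk $W_1$ in $H_j$ of length $c_1$ from $u|_{[1,j]}$ to $v|_{[1,j]}$; fix any such $W_1$. Let $a_i$ and $b_i$ denote the $i$-th steps of $W_1$ and $W_2$, and interleave as $(a_1,b_1,\dots,a_t,b_t)$ when $\gamma=0$ and as $(b_1,a_1,b_2,a_2,\dots,b_{t-1},a_{t-1},b_t)$ when $\gamma=1$. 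The resulting walk in $H_n$ has exactly $2\max(o,t)-\gamma$ edges and, by construction, its colors alternate.

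\textbf{Main obstacle: simplicity.} The one genuinely delicate step is to verify that this interleaved walk is a \emph{path}, since $W_1$ itself need not be simple. A vertex of $H_n$ is determined by its restrictions to $[1,j]$ and $[j+1,n]$, so the vertex reached after $i$ steps of $W_1$ and $k$ steps of $W_2$ is the pair $(W_1[i],W_2[k])$. The interleaving attains each index pair $(i,k)$ at most once, and always with $|i-k|\le 1$. Since $W_2$ is simple, equality of suffixes forces $k=k'$, after which the possible $i$-values are consecutive in $W_1$ and hence give distinct $W_1$-positions. Routing the possibly non-simple walk $W_1$ through the guaranteed simplicity of $W_2$ is the key observation; once this is in place the upper bound matches the lower bound and the theorem follows.
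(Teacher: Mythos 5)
Your proof is correct and takes essentially the same route as the source: a lower bound from the alternation constraint $|c_1-c_2|\le 1$ combined with the parity of $o+t$, and an upper bound from an explicit walk that interleaves flips of the first $j$ bits with flips of the last $n-j$ bits. The paper only cites this theorem from Cheng et al.\ and sketches the construction informally (asserting that ``the process will not repeat vertices''), so the one thing you add is the careful verification of simplicity via the product decomposition $(W_1[i],W_2[k])$ and the simple factor $W_2$ --- a detail the sketch glosses over but not a different method.
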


 For the number of the shortest properly colored paths in edge-colored hypercubes, there are no general results.
 Cheng et al. determined the special case $j=1$ in $(j)$-colored hypercubes \cite{Eddie
 Cheng}. Let $pp(u,v)$ denote the number of distinct shortest properly colored paths in $H$ from $u$ to $v$ and
  $a(m)$ is the number of all possible permutations and combinations after $m$  transformations of the first $j$ bits or the last $n-j$ bits.
 If we consider the first $j$ bits, let $a(m)=a_{o}(m)$; Otherwise, 
 let $a(m)=a_{t}(m)$.

\begin{thm}[\cite{Eddie Cheng}]
Let $n\geq2$ and let $H$ be the $(1)$-coloring of $H_n$. Then $H$ is properly connected. Furthermore, let $u$ and $v$ be any pair of distinct vertices whose strings differ in $t = t(u, v )$ positions other than the first. If $t=0$, then $u$ and $v$ are adjacent so
$pd(u, v ) = 1$ and $pp(u, v ) = 1$. Otherwise, for $t \geq 1$, we have $pd(u, v ) = 2t-\gamma(u, v )$ and $pp_H(u, v ) = (2-\gamma(u,v ))(t!)$.
\end{thm}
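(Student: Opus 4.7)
The plan is to pin down the structure of a shortest properly colored $u$--$v$ path and then count directly. Fix distinct vertices $u,v$ and set $t=t(u,v)$ and $o=o(u,v)\in\{0,1\}$. The boundary case $t=0$ forces $o=1$, so $u,v$ are joined by a single color-$1$ edge and $pd(u,v)=pp(u,v)=1$; henceforth I assume $t\ge 1$.

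Since color $1$ labels precisely the edges flipping bit $1$, while color $2$ labels every edge flipping some bit in $\{2,\dots,n\}$, any properly colored walk strictly alternates between color-$1$ steps (each flipping bit $1$) and color-$2$ steps (each flipping a single coordinate in $\{2,\dots,n\}$). Writing $c_1,c_2$ for the numbers of steps of each color, alternation gives $|c_1-c_2|\le 1$ and the length of the walk is $c_1+c_2$. Ending at $v$ forces $c_1\equiv o\pmod 2$ and forces the color-$2$ flips to hit each of the $t$ discrepant coordinates an odd number of times and every other coordinate in $\{2,\dots,n\}$ an even number of times, so $c_2\ge t$. A short case analysis on the parities of $o$ and $t$ then shows that $c_1+c_2$ is minimized by $c_2=t$ together with $c_1\in\{t-1,t\}$ chosen to match the parity of $o$: one obtains $c_1=c_2=t$ when $o+t$ is even (hence $\gamma=0$) and $c_1=t-1<c_2=t$ when $o+t$ is odd (hence $\gamma=1$). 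In both cases $c_1+c_2=2t-\gamma$, reproducing the distance formula stated above.

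To count, note that if $c_2=t$ then the color-$2$ flips must hit each of the $t$ discrepant coordinates exactly once (any repetition would force $c_2>t$), giving $t!$ possible orderings; the color-$1$ steps all flip bit $1$ and so contribute no choice. The color pattern itself is fixed up to its starting color, which is forced to be color $2$ when $c_1<c_2$ (one pattern, $pp=t!$ when $\gamma=1$) and is free when $c_1=c_2$ (two patterns, $pp=2\cdot t!$ when $\gamma=0$); uniformly $pp(u,v)=(2-\gamma(u,v))(t!)$. The subtle point I anticipate is verifying that the walks so produced are simple paths and are pairwise distinct; this is immediate once one observes that the color-$2$ flips lie in pairwise distinct coordinates, so the vertex reached after $k$ steps is uniquely encoded by the prefix of flips, and this encoding differs both across different orderings and across different time indices.
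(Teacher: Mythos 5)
Your proof is correct and follows essentially the same route as the paper: the paper states this theorem as a cited result of Cheng et al.\ without giving a proof, but its Section~2 method (alternately flipping bit $1$ and the discrepant coordinates among the last $n-1$ bits, then counting the $t!$ orderings times the $2-\gamma(u,v)$ admissible starting colors) is exactly your argument specialized to $j=1$. Your explicit lower-bound bookkeeping ($|c_1-c_2|\le 1$, $c_2\ge t$, $c_1\equiv o\pmod 2$) and the verification that the resulting walks are pairwise distinct simple paths supply details that the paper leaves implicit.
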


 In next section, we determine the case for $j>1$.
 Moreover, a more general result will be discussed. For the structural properties we refer to \cite{liyang,Harary,Yang 09, Yang 10,Yang 12}.

 \section{Number of Proper Paths Between Vertices}
In this section, we shall count the number of different shortest proper paths
between any two vertices in a $(j)$-colored hypercube. Cheng et al. induced a method to find properly colored paths between two given vertices $u$ to $v$. We will follow the method in our proof to find shortest properly colored paths between two given vertices. We give an  example as follows:

 \begin{exam}[\cite{Eddie Cheng}]
If $n=7$ and $j=4$ and we consider $u$=$( 0 , 1 , 0 , 1 , 0 , 0 , 0 )$, and $v$=$( 0 , 0 , 1 , 1 , 1 , 1 , 1 )$, then some examples of
proper colored paths from $u$ to $v$ created by the algorithm above would be as follows:

\begin{equation*} \label{eq:1}
\begin{split}
u =
  (0,1,0,1,0,0,0)\rightarrow\\
  (0,0,0,1,0,0,0)\rightarrow\\
  (0,0,0,1,1,0,0)\rightarrow\\
u^{\prime}=(0,0,1,1,1,0,0)\rightarrow\\
  (0,0,1,1,1,1,0)\rightarrow\\
  (1,0,1,1,1,1,0)\rightarrow\\
  (1,0,1,1,1,1,1)\rightarrow\\
v =(0,0,1,1,1,1,1)
 \end{split}
 \end{equation*}

\end{exam}
This is an arbitrary proper colored path.

\begin{equation*} \label{eq:1}
\begin{split}
u =
(0,1,0,1,0,0,0)\rightarrow\\
(0,1,0,1,1,0,0)\rightarrow\\
(0,0,0,1,1,0,0)\rightarrow\\
(0,0,0,1,1,1,0)\rightarrow\\
u^{\prime}=(0,0,1,1,1,1,0)\rightarrow\\
v =(0,0,1,1,1,1,1)
 \end{split}
 \end{equation*}

This is a shortest proper colored path.

In fact, in the hypercube, any path from $u$ to $v$
consists of  flipping the bits of $u$ until arrive at $v$ \cite{Eddie Cheng}.
A properly colored path from $u$ to $v$  consists of flipping
the bits of $u$ until we arrive at $v$ with the restriction that the
bits we flip must alternate between the first $j$ bits and the last
$n-j$ bits. To construct such a properly colored path, flip the bits
of $u$ that differ with the respective bits of $v$, making sure to
flip the bits in an order that alternates between the first $j$ bits
and the last $n-j$ bits, until we arrive at a vertex $u{'}$ such
that either $o(u{'},v)=0$ or $t(u{'},v)=0$.

If $o(u{'},v)=0$, we alternate flipping the first bit with flipping
the remaining bits in the last $n-j$ positions that differ with the
respective bits of $v$. Otherwise if $t(u{'},v)=0$, then we alternate
flipping the last bit with flipping the remaining bits in the first $j$
positions that differ with the respective bits of $v$. Since there are
a finite number of positions in which $u$ and $v$ differ and the process
will not repeat vertices, this process will eventually terminate, resulting
in a properly colored path from $u$ to $v$ \cite{Eddie
 Cheng}.

Certainly the path contains at least $o(u, v)$ bits flipped from the first $j$ bits and at least $t(u, v)$ bits flipped from the last
$n-j$ bits. A properly colored path, the flipping must alternate between the first $j$ bits and the last $n-j$
bits, there must be $2max(o(u,v),t(u,v))-\gamma(u,v)$ bits flipped in total.
Clearly, if $o=t$, the length of the shortest proper path from $u$ to $v$ is $2o(u,v)-\gamma(u,v)=2o(u,v)=2t(u,v)$, then
we can first flip a bit from the first $j$ bits or the last $n-j$ bits of the $u$. Otherwise, we should consider the size of $o$ and $t$, as the number of the shortest proper colored path is determined
by $pd(u,v)$ and the first flipped bit, where the first flipped bit can only be in the larger side of the $o$ and the $t$.

\begin{thm}\label{thm2.1}
 Let $n\geq2$, $u,v\in V(H_n)$, and let $H$ be the $(j)$-coloring of $H_{n}$.
 Suppose  $o=o(u,v)$  and  $t=t(u,v)$,
then  $pp_{H}(u,v)=(2-\gamma(u,v))(max(o!,t!))a(m)$.
\end{thm}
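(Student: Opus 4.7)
The plan is to parametrize every shortest proper path by the independent choices made in the algorithmic construction recalled just before the statement, and then multiply the number of options at each stage. I will fix $u,v\in V(H_n)$ and write $o=o(u,v)$, $t=t(u,v)$, $\gamma=\gamma(u,v)$, $M=\max(o,t)$. By Theorem~1.1 every shortest proper path has exactly $L=2M-\gamma$ edges. Since consecutive edges alternate colors, and color $1$ flips a coordinate among the first $j$ while color $2$ flips a coordinate among the last $n-j$, the color of the first edge determines the entire color pattern; writing $k_1,k_2$ for the number of edges of each color, we have $k_1+k_2=L$ and $|k_1-k_2|\le 1$.

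I first decide the color of the first edge. If $\gamma=0$ then $k_1=k_2=M$ and both starting colors are feasible; if $\gamma=1$ then $\{k_1,k_2\}=\{M,M-1\}$, and since the side on which $u$ and $v$ differ in $M$ positions needs at least $M$ flips, the larger side must host the starting edge. This contributes the factor $2-\gamma(u,v)$.

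Next I count the orderings on the larger side. That side hosts exactly $M$ flips; each of its $M$ differing bits must be flipped an odd number of times and each non-differing bit an even number, but the $M$ differing bits already saturate the $M$ slots, so no non-differing bit can appear there and no differing bit is repeated. Hence the larger side contributes a permutation of its mandatory bits, giving $M!=\max(o!,t!)$ sequences. For the smaller side there are $k_{\mathrm{small}}=L-M$ flips, of which $\min(o,t)$ are mandatory differing bits and the remaining (always even) number of flips distribute in pairs among non-differing bits (or as additional odd-multiples of differing bits). By the definition of $a_o(m)$ and $a_t(m)$ recalled in Section~1, the count of valid such sequences is precisely $a(m)$ with $m=k_{\mathrm{small}}$.

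Multiplying the three independent counts gives the asserted formula. Two bookkeeping points require attention: every counted flip-sequence genuinely yields a path, and distinct choices give distinct paths. The first follows from bipartiteness of $H_n$: a repeated vertex would create an even cycle in the walk, whose excision yields a strictly shorter properly colored walk from $u$ to $v$ (the alternation at the cut is preserved because the cycle has even length, so the edges on either side of the cut have opposite colors), contradicting the minimality of $L$. The second is immediate, since the starting color is visible from the alternation pattern and the flip sequence starting at $u$ reconstructs the walk uniquely in $H_n$. The main obstacle is the smaller-side step: the Introduction describes $a(m)$ only informally as ``permutations and combinations after $m$ transformations'', so the real combinatorial work is to unpack that definition into an explicit enumeration of valid smaller-side flip-sequences, and to verify it against Example~2.2 (which forces $a(m)=2$) and against Theorem~1.2 (the $j=1$ case).
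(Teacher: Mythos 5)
Your decomposition --- (choice of starting color) $\times$ (orderings on the larger side) $\times$ (flip-sequences on the smaller side) --- is the same skeleton the paper uses, and your execution is correct; in fact it diverges from the paper's own computation in one place where the divergence matters. The paper attaches $a(m)$ to the \emph{larger} side: in Subcase 2a it sets $l=j$ and $m=\lceil\frac{1}{2}pd(u,v)\rceil=\max(o,t)$ and sums the multinomial coefficients over the first $j$ bits, which forces every $k_i\in\{0,1\}$ and collapses $a(m)$ to $\max(o,t)!$, so the formula as the paper evaluates it reads $(2-\gamma)(\max(o!,t!))^2$. Your version puts $a(m)$ on the \emph{smaller} side with $m=pd(u,v)-\max(o,t)=\max(o,t)-\gamma$, and that is the reading that survives a check: for $n=3$, $j=2$, $u=(1,1,0)$, $v=(0,0,0)$ there are exactly $4=2\cdot 2!\cdot 1$ shortest proper paths, matching your count (the smaller side admits only the sequence flipping bit $3$ twice), whereas the larger-side reading gives $8$. (When $\gamma=0$ the two sides happen to carry the same number $m$ of flips, and in Case 1 the two multinomial sums coincide, which is why the paper's consistency check for $o=t$ goes through and hides the issue.) You also supply the excision-in-a-bipartite-graph argument showing every admissible flip-sequence of length $pd(u,v)$ is a genuine (vertex-simple) path, which the paper leaves implicit. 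What your outline leaves open --- turning the informal ``permutations and combinations after $m$ transformations'' into the explicit sum $\sum m!/(k_1!\cdots k_l!)$ over $k_i$ odd on the differing positions and even on the rest, evaluated by the exponential generating function $\bigl(\tfrac{e^x-e^{-x}}{2}\bigr)^{\min(o,t)}\bigl(\tfrac{e^x+e^{-x}}{2}\bigr)^{l-\min(o,t)}$ with $l$ the number of bits on the smaller side --- is exactly the computation carried out in Subcase 2a, so your argument together with that computation (applied to the correct side and the correct $m$) yields a complete proof.
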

\begin{proof}
Without loss of generality, let $v=(0,0,\ldots,0)$ and
let $u$ be an arbitrary vertex of $H_{n}-\{v\}$. If $u$ and $v$
are adjacent, then
$pp_{H}(u,v)=(2-\gamma(u,v))(max(o!,t!))a(m)=(2-1)(1!)a(1)=1$. We
then consider $u$ and $v$ are not adjacent.

Note that the number of shortest properly colored paths in the
$(1)$-coloring of $H_{n}$ be given in \cite{Eddie Cheng} and
$pp_{H}(u,v)=(2-\gamma(u,v))(t!)$. So we consider $2\leq j< n$.
Clearly, every proper path from $u$ to $v$ must use alternative
edges in color $1$ and color $2$. According to the method of finding
the shortest proper colored paths in edge-colored hypercubes,
 the first flipped bit of the shortest
proper coloring paths is determined by the size of $o$ and $t$. We then consider the following three cases.

 {\bf Case 1.}\quad $o=t$, $pp_{H}(u,v)=2(o!)(t!)$.

 In this case, since $\gamma(u,v)=0$, there is an even total number of positions in which $u$ and $v$ differ.
 Then every shortest proper path will start on an edge of color $1$ or color $2$,
 by switching $1$ to $0$ in some positions.
 Let $\pi$ be an arbitrary ordering of the $o$ '1's in the first $j$ bits in $u$
 and let $\pi{'}$ be an arbitrary ordering of the $t$ '1's in the last $n-j$ positions in $u$.
 For any choice of $\pi$ and $\pi{'}$, the sequence of switches of positions $\pi(1), \pi{'}(1), \pi(2), \pi{'}(2),\ldots, \pi(t), \pi{'}(t)$ and the
 sequence of switches of positions $ \pi{'}(1), \pi(1), \pi'(2), \pi(2),\ldots, \pi'(t), \pi(t)$ both constitute shortest proper paths from $u$ to $v$ and are
 all distinct. Clearly these paths have length $2t-\gamma(u,v)=2t=2o$.
 Also there can be no other shortest proper path from $u$ to $v$ because every proper path from $u$ to $v$ must include all $t$ of the '1's and  all $o$ of the '1's and must also alternate switching the first $j$ bits and the last $n-j$ bits. Since there are $(o!)(t!)$ such orderings, there are a total of $2(o!)(t!)=(2-\gamma(u,v))(o!)(t!)$ distinct shortest proper paths from $u$ to $v$.

 The proof of $2(o!)(t!)=(2-\gamma(u,v))(max(o!,t!))a(m)$ will be given in later.

 {\bf Case 2.}\quad $o>t$. 

 Since the parity of the $\gamma$ affects the arrangement of the shortest proper coloring paths, we then consider two subcases for $\gamma(u,v)=0$ and $\gamma(u,v)=1$.

 {\bf Subcase 2a.}\quad $\gamma(u,v)=0$, $pp_{H}(u,v)=2(o!)a(m)$.

 In this case, we have $o>t$ and $\gamma(u,v)=0$, this means that every shortest proper path will start on an edge of color $1$ or color $2$.
 As a proper colored path must alternate between the first $j$ bits and the last $n-j$ bits, and the length of the proper colored path from $u$ to $v$ is certain. $pd(u,v)=2\max (o(u,v),t(u,v))-\gamma(u,v)$, we consider all possible permutations of the first $j$ bits.

 In order to distinguish symbols, let $j=l$ , $m=\lceil\frac{1}{2}pd(u,v)\rceil$ and $a(m)=a_{o}(m)$ be all possible permutations of the first $l$ bits. Let $k_{i}$ is the number of the $i$ bit flips from $u$ to $v$ in the first $l$ bits, there are $o$ positions in which $u$ and $v$ differ in the first $l$ bits, so $ k_{1},k_{2},\cdots,k_{o} $ are odd; $k_{j}$ is the number of the $j$ bit flips from $u$ to $v$ in the first $l$ bits, this means that $k_{o+1},k_{o+2},\cdots,k_{l}$ are even, in total $l-o$ positions. Then,
 \begin{equation*} \label{eq:1}
\begin{split}
 a(m)=a_{o}(m)=\sum_{\substack{k_{1}+k_{2}+\cdots+k_{l}=m\\   k_{i}\mbox{ is odd},\ 1\leq i\leq o\\k_{i}\mbox{ is even},\ o+1\leq i\leq l}}\frac{m!}{k_{1}!k_{2}!\cdots k_{l}!}
 \end{split}
 \end{equation*}

By using generating function technique, we derive a formula for $a(m)$ as follows.
\begin{equation*} \label{eq:1}
\begin{split}
A(x) &=
\sum_{m\geq0}a(m)\frac{x^{m}}{m!}
\end{split}
 \end{equation*}
 \begin{equation*} \label{eq:1}
\begin{split}
 &= \sum_{m\geq0}\sum_{\substack{k_{1}+k_{2}+\cdots+k_{l}=m\\   k_{i}\mbox{ is odd},\ 1\leq i\leq o\\k_{i}\mbox{ is even},\ o+1\leq i\leq l}}\frac{m!}{k_{1}!k_{2}!\cdots k_{l}!}\frac{x^{k_{1}+k_{2}+\cdots+k_{l}}}{m!}\\
&=\prod_{i=1}^{o}\big(\sum_{k_{i}\mbox{ is odd}}\frac{x^{k_{i}}}{k_{i}!}\big)^{o}\prod_{i=o+1}^{l}\big(\sum_{k_{i}\mbox{ is even}}\frac{x^{k_{i}}}{k_{i}!}\big)^{l-o}\\
&=\big(\sum_{i\geq0}\frac{x^{2i+1}}{(2i+1)!}\big)^{o}(\sum_{i\geq0}\frac{x^{2i}}{(2i)!})^{l-o}\\
&=\big(\frac{e^{x}-e^{-x}}{2}\big)^{o}\big(\frac{e^{x}+e^{-x}}{2}\big)^{l-o}\\
&=\frac{1}{2^{l}}(e^{-x})^{l}(e^{2x}-1)^{o}(e^{2x}+1)^{l-o}\\
&=\big(\frac{e^{^{-x}}}{2}\big)^{l}\big(\sum\limits_{i=0}^o\binom{o}{i}e^{2ix}(-1)^{o-i}\big)\big(\sum\limits_{j=0}^{l-o} \binom{l-o}{j}e^{2jx}\big)\\
&=(\frac{e^{^{-x}}}{2})^{l}\sum\limits_{i=0}^o (-1)^{o-i}\sum\limits_{j=0}^{l-o} \binom{o}{i}\binom{l-o}{j}e^{2(i+j)x}\\
&=(\frac{1}{2^{l}})\sum\limits_{i=0}^o \sum\limits_{j=0}^{l-o}(-1)^{o-i}\binom{o}{i} \binom{l-o}{j}e^{(2(i+j)-l)x}
 \end{split}
 \end{equation*}

Combining the above arguments, so we have
 \begin{equation*} \label{eq:1}
\begin{split}
 a(m)=a_{o}(m)=\frac{1}{2^{l}}\sum\limits_{i=0}^o \sum\limits_{j=0}^{l-o}(-1)^{o-i} \binom{o}{i} \binom{l-o}{j}(2i+2j-l)^{m}
 \end{split}
 \end{equation*}

So $o>t$ and $\gamma(u,v)=0$, $pp_{H}(u,v)=2(o!)a_{o}(m)=2(o!)a(m)$, the proof is complete.

For case $1$, as $ m=o=t $ and $ k_{i}=1, 1\leq i\leq l$,
\begin{equation*} \label{eq:1}
\begin{split}
 a(m)&=\sum_{\substack{k_{1}+k_{2}+\cdots+k_{l}=m\\ k_{i}\mbox{ is odd}\ k_{i}=1\ 1\leq i\leq o\\k_{i}\mbox{ is even}\ k_{i}=0\ o+1\leq i\leq l}}\frac{m!}{k_{1}!k_{2}!\cdots k_{l}!}\\
 &=\frac{o!}{1!1!\ldots 1!}=o!=t! ,
 \end{split}
 \end{equation*}
we completed $(2-\gamma(u,v))max(o!,t!)a(m)=2(o!)(t!)$.

 {\bf Subcase 2b.}\quad $\gamma(u,v)=1$, $pp_{H}(u,v)=(o!)a(m)$.

  In this case, we have $o>t$ and $\gamma(u,v)=1$, this means that every  shortest proper path will start on an edge of color $1$. As the length of the proper colored path from $u$ to $v$ is certain $pd(u,v)=2\max (o(u,v),t(u,v))-\gamma(u,v)$, and a proper colored path must alternate between the first $j$ bits and the last $n-j$ bits, we consider all possible permutations of the former $j$ bits. By subcase 2a, we have calculated $a(m)$. So if $o>t$ and $\gamma(u,v)=1$, then $pp_{H}(u,v)=(o!)a(m)$.

 {\bf Case 3.}\quad $o<t$. 

In this case, the prove is similar to case $2$, we first consider the last $n-j$ bits (since $o<t$ and $pd(u,v)=2t(u,v)-\gamma(u,v)$ ).
Similarly case $2$, exchange $o$ and $t$,
we have $\gamma(u,v)=0$, $pp_{H}(u,v)=2(t!)a(m)$; $\gamma(u,v)=1$, $pp_{H}(u,v)=(t!)a(m)$,
where
  \begin{equation*} \label{eq:1}
\begin{split}
 a(m)&=a_{t}(m)=\sum_{\substack{k_{1}+k_{2}+\cdots+k_{l}=m\\   k_{i}\mbox{ is odd},\ 1\leq i\leq t\\k_{i}\mbox{ is even},\ t+1\leq i\leq l}}\frac{m!}{k_{1}!k_{2}!\cdots k_{l}!}\\
& =\frac{1}{2^{l}}\sum\limits_{i=0}^t \sum\limits_{j=0}^{l-t}(-1)^{t-i} \binom{t}{i} \binom{l-t}{j}(2i+2j-l)^{m}
 \end{split}
 \end{equation*}
\end{proof}

Let $N=N_1\cup N_2$, where $N=\{1,2,\cdots, n\}$,
$N_1=\{i_1,i_2,\cdots,i_j\}$ and $\{i_{j+1},\cdots,i_n\}$, $i_l\in
N, l=1,2,\cdots,n$. We may naturally refine the  2-edge coloring by
defining two set of edges $E_1$ and $E_2$, where $E_1$ is set of all
$i$-dimensional edge for $i\in N_1$ and $E_2$ is the set of all
$i$-dimensional edges for $i\in N_2$,  such that the  edges in $E_i$
having color $i$. The new definition is called $j^*$-coloring of
hypercubes. In fact, according to the method in the proof of Theorem
2.1, one can see that the Theorem is still true for any
$j^*$-colored hypercubes. 

By the arguments similar to that of Theorem~\ref{thm2.1}, we have the following.

 \begin{thm}
 Let $n\geq2$, and let $H$ be the  $j^*$-coloring of $H_{n}$. Then $H$ is properly connected. Furthermore, let $u$ and $v$ be any pair of distinct vertices whose strings differ both in $o=o(u,v)$ positions for $1\leq i\leq j$ and $t=t(u,v)$ positions for $j< i< n$. We have $pd(u,v)=2max(o,t)-\gamma(u,v)$ and $pp_{H}(u,v)=(2-\gamma(u,v))(max(o!,t!))a(m)$.

\end{thm}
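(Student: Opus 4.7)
The plan is to reduce the statement to Theorem~\ref{thm2.1} by a coordinate-permutation isomorphism, exploiting the fact that the $j^*$-coloring differs from the $(j)$-coloring only in the labels of the dimensions, not in any structural property of the colored graph. This is exactly the point the authors make in the paragraph preceding the theorem, and the task is to formalize it.

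First I would choose any bijection $\sigma:N\to N$ that maps $N_1=\{i_1,\ldots,i_j\}$ onto $\{1,\ldots,j\}$ and $N_2=\{i_{j+1},\ldots,i_n\}$ onto $\{j+1,\ldots,n\}$, and lift it to the vertex-relabelling
\[
\Phi:V(H_n)\to V(H_n),\qquad (x_1,\ldots,x_n)\mapsto (x_{\sigma^{-1}(1)},\ldots,x_{\sigma^{-1}(n)}).
\]
Then I would verify that $\Phi$ is a graph automorphism of $H_n$ carrying every $i$-dimensional edge to a $\sigma(i)$-dimensional edge. Consequently $\Phi$ sends the two color classes $E_1,E_2$ of the $j^*$-coloring $H$ exactly to the color classes of the standard $(j)$-coloring $H'$ of $H_n$.

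Because $\Phi$ is color-preserving, it induces a length-preserving bijection between properly colored $u$-$v$ paths in $H$ and properly colored $\Phi(u)$-$\Phi(v)$ paths in $H'$, so
\[
pd_H(u,v)=pd_{H'}(\Phi(u),\Phi(v)),\qquad pp_H(u,v)=pp_{H'}(\Phi(u),\Phi(v)).
\]
At the same time, by the definition of $\sigma$, the quantities $o(u,v)$, $t(u,v)$ and hence $\gamma(u,v)$ are invariant under $\Phi$: a position $i\in N_1$ where $u,v$ differ corresponds to a position $\sigma(i)\in\{1,\ldots,j\}$ where $\Phi(u),\Phi(v)$ differ, and similarly for $N_2$. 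Applying Theorem~\ref{thm2.1} to the pair $\Phi(u),\Phi(v)$ in $H'$ then yields both claimed formulas for $u,v$ in $H$.

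There is no genuine obstacle here beyond careful bookkeeping; every ingredient of the proof of Theorem~\ref{thm2.1}---the constructive algorithm for shortest properly colored paths, the case analysis by parity of $o+t$, and the generating-function evaluation of $a(m)$---depends only on the partition of the $n$ dimensions into two color classes of sizes $j$ and $n-j$, and is manifestly invariant under the coordinate permutation $\sigma$. Hence the formula $pp_H(u,v)=(2-\gamma(u,v))(\max(o!,t!))a(m)$ transfers verbatim, with the same formula for $a(m)=a_o(m)$ or $a_t(m)$ as in Theorem~\ref{thm2.1}, now interpreted with respect to the partition $N=N_1\cup N_2$.
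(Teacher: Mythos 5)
Your reduction via the coordinate permutation $\sigma$ and the induced color-preserving automorphism $\Phi$ is correct, and it formalizes exactly what the paper does: the paper offers no separate proof, merely observing that the argument for Theorem~\ref{thm2.1} depends only on the partition of the dimensions into classes of sizes $j$ and $n-j$ and therefore carries over to any $j^*$-coloring. Your version is in fact tighter than the paper's, since you make the invariance of $o$, $t$, $\gamma$, and the path counts under $\Phi$ explicit rather than asserting that the arguments are ``similar.''
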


 \section{Conclusions}

 In this paper, we count the number of different shortest proper paths in the $(j)$-colored hypercube, note that the $(j)$-coloring of $H_{n}$ to be the $2$-coloring of the edges of $H_{n}$. We show that given the $(j)$-coloring of $H_{n}$, $pp_{H}(u,v)=(2-\gamma(u,v))(max(o!,t!))a(m)$.

 \section{Acknowledgements}
The research is supported by NSFC (No.11671296, 61502330),  PIT of Shanxi, Research Project Supported by Shanxi Scholarship Council of China, and Fund Program for the Scientific Activities of Selected
Returned Overseas Professionals in Shanxi Province.


\begin{thebibliography}{15}


\bibitem{Adler} M. Adler, E. Halperin, R. M. Karp, and V. V. Vazirani, A stochastic process on the hypercube with applications to peer-to-peer networks, In Proceeddings of the Thirty-Fifth Annual ACM, Symposium on Theory of Computing, 575-584, 2003.



\bibitem{Eddie Cheng} E. Cheng, C. Magnant, D. Medaramentla, Proper distance in edge-colored hypercubes, Applied Mathematics and Computation. 313 (2017) 384--391.

\bibitem{Coll} V. Coll, J. Hook, C. Magnant, K. McCready, and K. Ryan. Proper diameter of graphs. Manuscript.

\bibitem{Colton} S. Fujita and C. Magnant, Properly colored paths and cycles,
Discrete Applied Mathematics. 159 (2011) 1391-1397.

\bibitem{Harary} F. Harary, J. P. Hayes, and H.-J. Wu, A survey of the theory of hypercube graphs, Comput. Math. Appl. 15 (1988) 277-289.
    

\bibitem{liyang} H. Li, W. Yang, Bounding the size of the subgraph induced by mm vertices and extra edge-connectivity of hypercubes, Discrete Applied Mathematics 161(2013) 2753-2757.


\bibitem{Hli} H. Li, X. Li, and S. Liu, Rainbow connection of graphs
with diameter 2, Discrete Mathematics. 312 (2012) 1453-1457.

\bibitem{Li} X. Li and C. Magnant, Properly colored notions of connectivity--a dynamic survey, Theory and Applications of Graphs. 0(1) Article 2, 2015.

\bibitem{Yang 09} W. Yang, J. Meng, Extraconnectivity of hypercubes, Appl. Math. Lett. 22 (2009) 887-891.

\bibitem{Yang 10} W. Yang, J. Meng, Extraconnectivity of hypercubes II, Australas. J. Combin. 47 (2010) 189-195.

\bibitem{Yang 12} W. Yang, J. Meng, Generalized measures of fault tolerance in hypercube networks, Appl. Math. Lett. 25 (2012) 1335-1339.

\end{thebibliography}
\end{document}